\newtheorem{theorem}{Theorem}
\newtheorem{lemma}{Lemma}
\newtheorem{corollary}{Corollary}
\theoremstyle{definition}
\newtheorem{remark}{Remark}
\numberwithin{equation}{section}
\begin{document}

\title[On convergence rate to invariant measures in Markov Branching Processes]
    {On estimation of the convergence rate to \\ invariant measures in Markov Branching
    Processes \\ with possibly infinite variance and allowing immigration}

\author{{Azam~A.~IMOMOV}}
\address {Azam Abdurakhimovich Imomov
\newline\hphantom{iii} Karshi State University,
\newline\hphantom{iii} 17, Kuchabag st., 180100 Karshi city, Uzbekistan.}
\email{imomov{\_}\,azam@mail.ru}

\thanks{\copyright \ 2023 Imomov~A.A}

\subjclass[2020] {Primary 60J80; Secondary 60J85}

\keywords{Markov Branching Process; generating functions; immigration; transition functions;
        slowly varying function; invariant measures; convergence rate.}

\dedicatory{Dedicated to my Parents}

\begin{abstract}
    {The paper discusses the continuous-time Markov Branching Process allowing Immigration.
    We are considering a critical case for which the second moment of offspring law and the first
    moment of immigration law are possibly infinite. Assuming that the nonlinear parts of the appropriate
    generating functions are regularly varying in the sense of Karamata, we prove theorems on convergence
    of transition functions of the process to invariant measures. We deduce the speed rate of these
    convergence providing that slowly varying factors are with remainder.}
\end{abstract}

\maketitle

\section {Introduction}    \label{MySec:1}

    We continue our discussion of the population growth model called the continuous-time Markov Branching Process
    allowing Immigration (MBPI), which we considered in the paper {\cite{ImomovSFU14}}. Recall that this process can
    have a simple physical interpretation: the population size changes not only as a result of reproduction and
    disappearance of existing individuals, but also at the random stream of inbound ``extraneous'' individuals of
    the same type from outside. Namely, the process develops by the following scheme. Each individual existing at
    time $t \in {\mathcal{T}}:=[0, + \infty)$ independently of his history and of each other for a small time interval
    $(t, t+ \varepsilon)$ transforms into $j \in {\mathbb{N}}_0 \backslash \{ 1\} $ individuals with
    probability $a_j \varepsilon  + o(\varepsilon )$ and, with probability $1+ a_1 \varepsilon + o(\varepsilon)$
    stays to live or makes evenly one descendant (as $\varepsilon \downarrow 0$); where
    $\mathbb{N}_0=\{0\}\cup\mathbb{N}$ and $\mathbb{N}$ is the set of natural numbers. Here $\left\{{a_j} \right\}$
    are intensities of individuals' transformation that $a_j \ge 0$ for $j \in {\mathbb{N}}_0 \backslash \{1\}$ and
\[
    0 < a_0 < -a_1 = \sum\nolimits_{j \in {\mathbb{N}}_0 \backslash \{1\}}{a_j}<\infty. 
\]
    Independently of these
    for each time interval $j \in {\mathbb{N}}$ new individuals inter the population with probability
    $b_j \varepsilon + o(\varepsilon)$ and, immigration does not occur with probability $1 + b_0 \varepsilon + o(\varepsilon)$.
    Immigration intensities $b_j \ge 0$ for $j \in {\mathbb{N}}$ and 
\[
    0 <-b_0 = \sum\nolimits_{j \in {\mathbb{N}}}{b_j}<\infty.
\]
    Newly arrived individuals undergo transformation in accordance with the reproduction law generated by intensities
    $\left\{ {a_j} \right\}$; see {\cite[p.~217]{Sevast71}}. Thus, the process under consideration is completely
    determined by infinitesimal generating functions(GFs)
\begin{equation*}
    f(s) := \sum\limits_{j \in {\mathbb{N}}_0} {a_j s^j}
    \qquad  \mbox{and} \qquad  g(s) :=\sum\limits_{j \in {\mathbb{N}}_0}{b_j s^j}
    \qquad \parbox{2.4cm}{for {} $s\in{[0, 1)}$.}
\end{equation*}

    Denote $X(t)$ the population size at the time $t \in {\mathcal{T}}$ in MBPI. This is homogenous
    continuous-time Markov chain with state space $\mathcal{S}\subset\mathbb{N}_0$ and transition functions
\begin{equation*}
    p_{ij} (t):= \mathbb{P}_i \bigl\{ {X(t) = j} \bigr\}
    = \mathbb{P}\left\{X(t+\tau)=j \, \bigl| \, {X(\tau)=i} \bigr. \right\}
\end{equation*}
    for all $i,j \in {\mathcal{S}}$ and $\tau , t \in {\mathcal{T}}$.

    We devote the paper to the critical case only, i.e. $f'(1-)=\sum\nolimits_{j \in {\mathbb{N}}}{ja_j}=0$, and
    observe limit behaviours of transition functions $p_{ij} (t)$ as $t\to{\infty}$. Pakes~{\cite{PakesSankh}} was one
    of the first to study invariant measures for MBPI with finite variance and found an integral form of GF of invariant
    measures. He has proved that limits ${\pi}_j:=\lim_{t \to \infty}t^\lambda {p_{ij} (t)}$ exist independently on $j$, iff
    $\sum\nolimits_{j \in{\mathbb{N}}}{a_j j^2 \ln{j}}<\infty$ and $\sum\nolimits_{j \in {\mathbb{N}}}{b_j j\ln{j}}<\infty$,
    where $\lambda = {{2g'(1-)} \bigl/ {f''(1-)}\bigr.}$, besides the set $\left\{{\pi}_j, j\in{\mathcal{S}} \right\}$ presents
    an invariant measure for MBPI. In accordance with the results of the paper {\cite{LiPakes2012}} the invariant measure of
    MBPI can also be constructed by the strong ratio limit property of transition functions but slightly different in appearance.
    Namely, the set of positive numbers $\bigl\{\upsilon_j := \lim _{t \to \infty} {{p_{0j}(t)} / {p_{00} (t)}}\bigr\}$ is an
    invariant measure. Moreover it can be seen a close relation between the sets $\left\{{\pi}_j, j\in{\mathcal{S}} \right\}$
    and $\left\{\upsilon_j, j\in{\mathcal{S}}\right\}$, and their GFs $\pi{(s)} = \sum\nolimits_{j \in{\mathcal{S}}}{\pi_j s^j}$
    and $\mathcal{U}(s) = \sum\nolimits_{j \in{\mathcal{S}}}{\upsilon_j s^j}$. In fact, they are really only different versions
    of the same limit law. So, it is easy to see $\mathcal{U}(s)=\pi{(s)}\bigl/\pi{(0)}\bigr.$, and this is consistent with
    uniqueness, up to a multiplicative constant, of the invariant measure of MBPI.

    In this issue an exceptional interest represents an estimation of the rate of convergence to invariant measures. In
    our previous report~{\cite{ImomovSFU14}} the rate of convergence of $t^\lambda {p_{ij} (t)}$ to ${\pi}_j$, for all
    $i, j \in \mathcal{S}$, was studied under the condition $\max \bigl\{ f'''(1-), g''(1-) \bigr\} <\infty$. It was
    found ibidem that the convergence rate is $\mathcal{O}\bigl({{{\ln t}/ t}} \bigr)$ as $t\to{\infty}$.

    Throughout the paper, we adhere to the following conditions concerning
    $f(s)$ and $g(s)$ which are our \textit{Basic assumptions}:
\begin{equation*}
    f(s)=(1-s)^{1+\nu}\mathcal{L}\left({{{1} \over {1-s}}}\right),      \eqno[\textsf {$f_\nu$}]
\end{equation*}
    and
\begin{equation*}
    g(s)=-(1-s)^{\delta}{\ell}\left({{{1} \over {1-s}}}\right)          \eqno[\textsf {$g_\delta$}]
\end{equation*}
    for all $s\in [0, 1)$, where $0 < \nu , \delta < 1$ and $\mathcal{L}(\cdot)$, $\ell(\cdot)$ are
    \textit{slowly varying at infinity} (${\textbf{SV}}_\infty$) in the sense of Karamata; see for instance
    {\cite{Bingham}} and {\cite{SenetaRV}}. Basic assumptions imply that the offspring distribution belongs to
    the domain of attraction of the $(1+\nu)$-stable law, and the immigration distribution belongs
    to the domain of attraction of the $\delta$-stable law. Besides, by the criticality of our process,
    assumption $[f_\nu]$ implies that $2b:=f''(1-)=\infty$. If $b<\infty$ then representation $[f_\nu]$
    holds with $\nu =1$ and $\mathcal{L}(t) \to b$ as $t \to \infty$. Similarly, GF $g(s)$ of the form $[g_\delta]$
    generates the immigration law, having the $\delta$-order moment. But if $g'(1-)<\infty$ then the assumption
    $[g_\delta]$ will be fulfilled with $\delta =1$ and $\mathcal{\ell}(t) \to{g'(1-)}$ as $t \to \infty$.

    By perforce we allow to forcedly put forward an additional requirement for $\mathcal{L}(x)$ and ${\ell}(x)$. So we can write
\begin{equation*}
    {{\mathcal{L}\left( {\lambda x} \right)} \over {\mathcal{L}(x)}}
    = 1 + {\mathcal{O}}\bigl(\alpha(x)\bigr)
    \quad \parbox{2.2cm}{{as} {} $x  \to \infty$}        \eqno[\textsf {$\mathcal{L}_{\nu}$}]
\end{equation*}
    for each $\lambda > 0$, where $\alpha(x)$ is known positive decreasing function so that $\alpha(x) \to 0$ as
    $x \to \infty $. In this case $\mathcal{L}(x)$ is called ${\textrm{SV}}_\infty$ with remainder
    ${\mathcal{O}}\bigl(\alpha(x)\bigr)$; see {\cite[p.~185, condition SR1]{Bingham}}.
    Wherever we exploit the condition $\left[\mathcal{L}_{\nu} \right]$ we will suppose that
\begin{equation*}
    \alpha(x) = {\mathcal{O}}\left( {{{\mathcal{L}\left(x\right)} \over {x^\nu }}} \right)
    \quad \parbox{2.2cm}{{as} {} $x  \to \infty$.}
\end{equation*}
    Similarly, we also allow a condition
\begin{equation*}
    {{{\ell}\left( {\lambda x} \right)} \over {{\ell}(x)}}
    = 1 + {\mathcal{O}}\bigl(\beta(x)\bigr)
    \quad \parbox{2.2cm}{{as} {} $x  \to \infty$}     \eqno[\textsf {${\ell}_{\delta}$}]
\end{equation*}
    for each $\lambda > 0$, where
\begin{equation*}
    \beta(x) = {\mathcal{O}}\left( {{{{\ell}\left(x\right)} \over {x^\delta}}} \right)
    \quad \parbox{2.2cm}{{as} {} $x  \to \infty$.}
\end{equation*}

    In the preprint~{\cite{ImomovMeyliev20}} it was shown that the asymptotes of the transition functions depends on the sign
    of the parameter $\gamma := \delta - \nu$. In addition, the limit functions $U(s):=\lim_{t\to\infty}{\mathcal{P}}(t;s)$
    for $\gamma>0$ and $\pi(s):=\lim_{t\to\infty}e^{T(t)}{\mathcal{P}}(t;s)$ for $\gamma<0$ and for some $T(t)$ were found.

    In this report we attempt to deduce the speed rate of the convergence in theorems proved in
    {\cite{ImomovMeyliev20}} providing that conditions $[\mathcal{L}_{\nu}]$ and $[{\ell}_{\delta}]$ hold.

    The rest of this paper is organized as follows. Section~\ref{MySec:2} contains main results.
    Section~\ref{MySec:3} provides auxiliary statements that will be essentially used in the proof of our theorems.
    Section~\ref{MySec:4} is devoted to the proof of main results.

\section{Main Results}   \label{MySec:2}

    Put into consideration GF 
\[
    {\mathcal{P}}_i (t;s):=\sum\nolimits_{j \in {\mathcal{S}}}{p_{ij}(t)s^j}.
\]
    It is not difficult to see that (see {\cite{PakesSankh}})
\begin{equation}                    \label{2.1}
    {\mathcal{P}}_i (t;s) = \Bigl(F(t;s)\Bigr)^{i} \exp \left\{ {\int\limits_0^t {g\bigl({F(u;s)} \bigr)du}} \right\},
\end{equation}
    where $F(t;s)$ is GF of Markov Branching Process initiated by single individual without immigration. Since
    $F(t;s)\to 1$ as $t\to \infty$ uniformly in $s\in[0,d]$, $d<1$ (see Lemma~\ref{MyLem:1} below), it suffice
    to consider ${\mathcal{P}}(t;s):= {\mathcal{P}}_{0}(t;s)$. Then under Basic assumptions, due to the
    Kolmogorov's backward equation ${\partial{F}}\bigl/{\partial{t}}\bigr.=f\left(F\right)$, from \eqref{2.1} follows
\begin{equation}                    \label{2.2}
    {\mathcal{P}}(t;s) = \exp \left\{{\int\limits_s^{F(t;s)} {{{g(u)} \over {f(u)}}\,du}} \right\}.
\end{equation}
    In view of Basic assumptions, the integrand
\begin{equation}                    \label{2.3}
    {{g(u)} \over {f(u)}} = -{\left(1-u\right)^{\gamma -1} \textsf{\emph{L}}{\left({1} \over {1-u}\right)}},
\end{equation}
    where $\gamma := \delta - \nu$ and
\begin{equation*}
    \textsf{\emph{L}}(t) :={{\ell(t)} \over {\mathcal{L}(t)}} \,\raise0.8pt\hbox{.}
\end{equation*}
    All appearances, the three cases can be divided concerning the classification of $\mathcal{S}$, depending on a sign
    of $\gamma$. By virtue of \eqref{2.3}, integral $\int\nolimits_s^{1} {\left[{g(u)} \left/ {f(u)} \right.\right]du}$
    converges if $\gamma >0$, and diverges if $\gamma <0$. Thus and so, as it was shown in {{\cite{LiPakes2012}}, that
    $\mathcal{S}$ is \textit{positive-recurrent} if $\gamma > 0$, and it is \textit{transient} if $\gamma < 0$. The special
    case $\gamma = 0$ implies that $g(s)=f'(s)$ and that $\textsf{\emph{L}}(t) \to 1+\nu$ as $t \to \infty$. And we get
    another population process called \textit{Markov Q-process} instead of MBPI. We refer the reader to {\cite{Imomov17}}
    and {\cite{Imomov12}} for the details on the Markov Q-process; see also {\cite[pp.~56--58]{ANey}} and {\cite{Pakes99}}
    for the discrete-time case.

    Our main results appear only for the case $\gamma \neq 0$ in the following two theorems. Put
\begin{equation*}
    {\tau}(t) := {({\nu}t)^{1/\nu} \over {\mathcal{N}(t)}}
    \qquad \parbox{.8cm}{and} \qquad  T(t):={\bigl(\tau(t)\bigr)}^{|\gamma|},
\end{equation*}
    where ${\mathcal{N}}(x)$ is $\textrm{SV}_\infty$ defined in Lemma~\ref{MyLem:1} below.

\begin{theorem}                     \label{MyTh:1}
    Let $\gamma > 0$. Then ${\mathcal{P}}(t;s)$ converges to the function 
\[
    U(s)=\exp \left\{\int_s^{1}{\frac{g(u)}{f(u)}du}\right\}
\] 
    for $s \in [0, 1)$, and its power series expansion
    $U{(s)} = \sum\nolimits_{j \in{\mathcal{S}}}{u_j s^j}$ generates an invariant distribution
    $\left\{u_j, j\in{\mathcal{S}}\right\}$ for MBPI. The convergence is uniform over compact subsets
    of $[0, 1)$. In addition, if assumptions $[\mathcal{L}_{\nu}]$ and $[{\ell}_{\delta}]$ hold, then
\begin{equation}               \label{2.4}
    {\mathcal{P}}(t;s)= U(s)\left(1+{\Delta(t;s)}{{\mathcal{K}}} \bigl({\tau}(t)\bigr)\right),
\end{equation}
    where ${\mathcal{K}}(x)={\mathcal{L}}^{-{{\delta}/{\nu}}}(x) {\ell (x)}$ and the function
    ${\mathcal{N}}(x)$ is $\textrm{SV}_\infty$ defined in \eqref{3.2} below, herewith
\begin{equation*}
    \Delta(t;s) = {{\,1\,}\over {\gamma}}\,{{{1}} \over {\bigl(\lambda(t;s)\bigr)^{{\gamma} / \nu}}}
    +\mathcal{O} \left({{\ln \bigl[{\Lambda}({1-s})\lambda(t;s)\bigr]} \over
    {\bigl(\lambda(t;s)\bigr)^{{\delta / \nu}}}} \right)
    \quad \parbox{2.2cm}{{as} {} $t  \to \infty$,}
\end{equation*}
    where $\lambda(t;s)=\nu{t}+{\Lambda}^{-1}\left({1-s}\right)$
    and $\Lambda(y)=y^{\nu}\mathcal{L}\left(1/y\right)$.
    The transition functions are given by
\begin{equation}               \label{2.5}
    p_{ij} (t)= u_j \left(1+\mathcal{O} \left({{K(t)}\over{t^{\gamma/\nu}}}\right)\right)
    \quad \parbox{2.2cm}{{as} {} $t \to \infty$,}
\end{equation}
    where $K(t)$ is $\textrm{SV}_\infty$.
\end{theorem}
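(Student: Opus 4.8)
The plan is to prove the statement in three stages: qualitative convergence together with invariance, then the sharp relative estimate \eqref{2.4}, and finally the coefficientwise form \eqref{2.5}. \emph{Convergence and invariance.} Because $\gamma>0$, representation \eqref{2.3} shows that the integrand $g(u)/f(u)$ is regularly varying of index $\gamma-1>-1$ at $u=1$, hence integrable there, so $U(s)=\exp\{\int_s^1 [g(u)/f(u)]\,du\}$ is well defined on $[0,1)$. Lemma~\ref{MyLem:1} supplies $F(t;s)\to 1$ as $t\to\infty$ uniformly on compacta of $[0,1)$; letting the upper limit in \eqref{2.2} tend to $1$ gives $\mathcal{P}(t;s)\to U(s)$ with the same uniformity, while $U(1-)=1$ and the nonnegativity of its Taylor coefficients identify $\{u_j\}$ as a probability law. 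For invariance I would verify the identity $U(s)=U\bigl(F(t;s)\bigr)\exp\{\int_0^t g(F(u;s))\,du\}$: the substitution $w=F(u;s)$, $dw=f(w)\,du$, rewrites the exponent as $\int_s^{F(t;s)}[g/f]\,dw$, which together with the integral $\int_{F(t;s)}^1[g/f]\,du$ inside $U(F(t;s))$ reassembles $\int_s^1[g/f]\,du$. By \eqref{2.1} this identity is precisely the stationarity relation $\sum_i u_i p_{ij}(t)=u_j$ written through generating functions, so $\{u_j\}$ is invariant.

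\emph{The relative estimate.} From \eqref{2.2}, after the substitution $v=1-u$ and then $w=1/v$,
\begin{equation*}
    \frac{\mathcal{P}(t;s)}{U(s)}=\exp\bigl\{I(t;s)\bigr\},\qquad I(t;s):=\int_0^{1-F(t;s)}v^{\gamma-1}\,\textsf{\emph{L}}\!\left(\frac1v\right)dv=\int_{1/(1-F(t;s))}^{\infty}w^{-1-\gamma}\,\textsf{\emph{L}}(w)\,dw,
\end{equation*}
so that $\Delta(t;s)\,\mathcal{K}(\tau(t))=e^{I(t;s)}-1$. The core computation is a quantitative Karamata estimate of the last integral. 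Since $[\mathcal{L}_\nu]$ and $[\ell_\delta]$ furnish remainders for $\mathcal{L}$ and $\ell$, hence for $\textsf{\emph{L}}=\ell/\mathcal{L}$, I would obtain $I(t;s)=\gamma^{-1}(1-F)^{\gamma}\,\textsf{\emph{L}}(1/(1-F))$ plus a correction of order $(1-F)^{\delta}$ times slowly varying factors; the logarithmic factor $\ln[\Lambda(1-s)\lambda(t;s)]$ emerges when the slowly varying functions, naturally evaluated at the $s$-dependent scale $1/(1-F(t;s))$, are transferred to the scale $\tau(t)$, the discrepancy between the two scales being governed by $\Lambda^{-1}(1-s)$ against $\nu t$. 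Feeding in the precise asymptotics of $1-F(t;s)$ from Lemma~\ref{MyLem:1} — obtained by inverting $\int_{1-F}^{1-s}[f(u)]^{-1}\,du=t$ via Karamata, which shows $1/(1-F(t;s))$ to be the asymptotic inverse of $y\mapsto\Lambda(1/y)$ at $\lambda(t;s)=\nu t+\Lambda^{-1}(1-s)$ and thereby fixes $\tau$ and the normaliser $\mathcal{N}$ of \eqref{3.2} — converts the leading term into $\gamma^{-1}\lambda(t;s)^{-\gamma/\nu}\mathcal{K}(\tau(t))$ with $\mathcal{K}=\mathcal{L}^{-\delta/\nu}\ell$. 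Expanding $e^{I}-1=I+O(I^{2})$ and dividing by $\mathcal{K}(\tau(t))$ then yields the asserted form of $\Delta(t;s)$, the two contributions $O(I^{2})$ and the Karamata remainder collapsing into the single term $\mathcal{O}(\ln[\Lambda(1-s)\lambda]/\lambda^{\delta/\nu})$ once the slowly varying factors are simplified through $[\mathcal{L}_\nu]$ and $[\ell_\delta]$; verifying this collapse, in particular that every secondary term is dominated by the stated one using $\delta>\gamma$, is the technically heaviest part.

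\emph{Passage to coefficients.} Since $\Lambda^{-1}(1-s)\le\Lambda^{-1}(1)$ for $s\in[0,1)$, one has $\lambda(t;s)\sim\nu t$ uniformly in $s$, so \eqref{2.4} gives $\mathcal{P}_i(t;s)=F(t;s)^i\mathcal{P}(t;s)=U(s)\bigl(1+\mathcal{O}(K(t)/t^{\gamma/\nu})\bigr)$ with $K$ slowly varying, uniformly in $s\in[0,1)$ and in $i$ (the factor $F^i\to1$ contributing only a like-order term). Extracting the coefficient of $s^j$ and comparing with $u_j=[s^j]U(s)$ gives \eqref{2.5}. The main obstacle is exactly this last step: upgrading a uniform-in-$s$ \emph{relative} bound on generating functions to a \emph{coefficientwise} relative bound that is uniform in $i$ and $j$. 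I expect to handle it by a Tauberian-type argument exploiting the nonnegativity of $p_{ij}(t)$ and $u_j$ together with the monotone structure of $s\mapsto\mathcal{P}_i(t;s)/U(s)$, rather than by naive coefficient comparison, which only transmits bounds on partial sums.
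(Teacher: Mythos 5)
Your overall route is the paper's own: the factorisation $\mathcal{P}(t;s)=U(s)\exp\bigl\{\int_1^{F(t;s)}[g(u)/f(u)]\,du\bigr\}$, invariance via the Schr\"oder-type identity $U\bigl(F(\tau;s)\bigr)\mathcal{P}(\tau;s)=U(s)$ (the paper's \eqref{4.3}, reached there by letting $t\to\infty$ in the semigroup relation, by you through the substitution $w=F(u;s)$ --- both valid), and the rate via a Karamata-with-remainder estimate of $\int_{1/(1-F)}^{\infty}w^{-(1+\gamma)}\bigl[\ell(w)/\mathcal{L}(w)\bigr]dw$ combined with the expansion of $R(t;s)$ on the scale $\lambda(t;s)$; this is exactly the content of the paper's Lemmas~\ref{MyLem:2}--\ref{MyLem:4} and formulas \eqref{4.4}--\eqref{4.8}. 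Two points, however, need to be flagged.

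First, your error bookkeeping for \eqref{2.4} rests on the wrong criterion. The domination condition you invoke, $\delta>\gamma$, is vacuous ($\delta=\gamma+\nu>\gamma$ always) and is not what is needed. The Karamata remainder does land exactly at the stated order, because $\gamma/\nu+1=\delta/\nu$; but the quadratic term $\mathcal{O}(I^2)$ produced by expanding $e^{I}-1$ contributes, after division by $\mathcal{K}(\tau(t))$, a term of order $\lambda^{-2\gamma/\nu}$ times slowly varying factors, and this is dominated by $\ln[\cdot]\,\lambda^{-\delta/\nu}$ only when $2\gamma\ge\delta$ (equivalently $\delta\ge 2\nu$), which the Basic assumptions do not guarantee. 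So the ``collapse'' you defer as routine is not routine; to be fair, the paper's own combination of \eqref{4.4}, \eqref{4.5} and \eqref{4.8} passes over the same quadratic term silently.

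Second, the inequality ``$\Lambda^{-1}(1-s)\le\Lambda^{-1}(1)$'' is backwards. In the theorem $\Lambda^{-1}(1-s)$ denotes the reciprocal $1/\Lambda(1-s)$ (forced by \eqref{3.8}), and since $\Lambda(y)=y^{\nu}\mathcal{L}(1/y)\to 0$ as $y\downarrow 0$, this quantity is bounded below by $1/a_0$ and tends to $+\infty$ as $s\uparrow 1$. Hence $\lambda(t;s)\sim\nu t$ holds uniformly only on compacta $[0,r]$, $r<1$, and your claimed uniformity of the relative bound over all $s\in[0,1)$ --- and a fortiori over $i$ (note $1-F^i\le iR$ is likewise not uniform in $i$) and $j$ --- fails. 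Fortunately \eqref{2.5} is a fixed-$(i,j)$ statement whose $\mathcal{O}$-constant may depend on $(i,j)$; the paper obtains it from \eqref{2.4} on compacta by the continuity theorem for power series, which is all that is asserted. Your instinct that passing a relative bound from the generating function to its coefficients is delicate is sound (sup bounds on a real interval do not by themselves give same-order coefficient bounds), but the Tauberian repair you announce is never carried out, so as written your final step establishes neither the uniform claim you aim at nor the fixed-$(i,j)$ claim \eqref{2.5}.
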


    Another asymptotic property comes out for ${\mathcal{P}}(t;s)$ when $\gamma < 0$.
    We can easily verify that under Basic assumptions
\begin{equation*}
    -{{\,\ln{p_{00}(t)}\,} \over {T(t)}} \sim {{\,1\,}\over{|\gamma|}}
    {\textsf{\emph{L}}{\bigl(\tau{(t)}\bigr)}}
    \quad \parbox{2.2cm}{{as} {} $t  \to \infty$.}
\end{equation*}

    This asymptotic formula shows that ${{\bigl({T(t)}\bigr)^{-1}} \ln{p_{00}(t)}}$ is asymptotically $\textrm{SV}_\infty$,
    and it suggests that we should look for a limit as $t \to \infty$ of the function $e^{T(t)}{\mathcal{P}}(t;s)$.
    First wee need to discuss ${\textrm{SV}}_\infty$ property of $\textsf{\emph{L}}(t)$. In accordance with
    Slowly varying theory, functions ${\ell}(\cdot)$ and $\mathcal{L}(\cdot)$ are positive. Then by virtue
    of {\cite[p.~185, Theorem~3.12.2~{(SR1)}]{Bingham}}, we can get the following propositions:
\begin{itemize}
\vspace{2.2mm}
\item[$\blacktriangleright$]  $[\mathcal{L}_{\nu}]$ \quad $\Longleftrightarrow$ \quad
                                ${\mathcal{L}(x)} = C_{\mathcal{L}} + {\mathcal{O}}\bigl(\alpha(x)\bigr)
                              \quad \parbox{2.2cm}{{as} {} $t \to \infty$,}   \hfill [C_{\mathcal{L}}]$
\vspace{2.2mm}
\item[$\blacktriangleright$]  $[{\ell}_{\delta}]$ \quad \, $\Longleftrightarrow$ \quad ${{\ell}(x)}
                              = C_{\ell} + {\mathcal{O}}\bigl(\beta(x)\bigr)
                              \quad \parbox{2.2cm}{{as} {} $t \to \infty$,}   \hfill [C_{\ell}] $
\vspace{2.2mm}
\end{itemize}
    where $C_{\mathcal{L}}, C_{\ell}$ are positive constants and functions $\alpha(x), \beta(x)$ are in
    $[\mathcal{L}_{\nu}]$ and $[{\ell}_{\delta}]$. We then can reveal the fact that
\begin{equation}                    \label{2.6}
    \textsf{\emph{L}}(t) = {{\ell(t)} \over {\mathcal{L}(t)}} =
    {C_{\textsf{\emph{L}}}} + \mathcal{O}\left({{{\ell}(t)} \over {t^{\delta}}}\right)
    \quad \parbox{2.2cm}{{as} {} $t \to \infty$,}
\end{equation}
    since $\delta < \nu$, where $C_{\textsf{\emph{L}}}={{C_{\ell}}\left/{C_{\mathcal{L}}}\right.}$. Note this
    requirement put forward for $\textsf{\emph{L}}(t)$ is quite possible. Especially, we reach an ``excellent result''
    in this issue, if we exclusively require that $C_{\textsf{\emph{L}}}=|\gamma|$. Namely the following explicit-form
    of $\pi(s)=\lim_{t\to\infty}e^{T(t)}{\mathcal{P}}(t;s)$ was found in {\cite{ImomovMeyliev20}}:
\begin{equation}                    \label{2.7}
    \pi(s) = \exp \left\{{{\,1\,}\over{(1-s)^{|\gamma|}}} + {\int\limits_s^1 {\left[ {{{g(u)} \over {f(u)}}
    + {|\gamma| \over {(1-u)^{1+|\gamma|}}}} \right]du}} \right\}.
\end{equation}
    Now in the following theorem we determine the convergence rate of $e^{T(t)}{\mathcal{P}}(t;s)$ to $\pi(s)$.

\begin{theorem}                     \label{MyTh:2}
    Let $\gamma < 0$ and $C_{\textsf{L}}=|\gamma|$ in \eqref{2.6}. If $\mu:={2\delta -\nu}>0$, then
\begin{equation}                    \label{2.8}
    {e^{T(t)}} {\mathcal{P}} (t;s) = {\pi}(s) \bigl(1+\rho(t;s)\bigr),
\end{equation}
    where $\rho(t;s)\to 0$ as $t \to \infty$ uniformly in $s\in [0, r]$,  $r<1$, and the
    limiting GF ${\pi}(s)$ can be expressed in the form of \eqref{2.7}.  In addition,
    if assumptions $[\mathcal{L}_{\nu}]$ and $[{\ell}_{\delta}]$ hold, then
\begin{equation}                    \label{2.9}
    \rho(t;s) = \mathcal{O}\left({{{\ell}\left(\tau(t)\right)} \over {{\left(\tau(t)\right)}^{\mu}}}\right)
    \quad \parbox{2.2cm}{{as} {} $t \to \infty$}
\end{equation}
    uniformly in $s\in [0, r]$, $r<1$. Denoting the power series expansion of ${\pi}(s)$
    by $\sum\nolimits_{j \in{\mathcal{S}}}{\pi_j s^j}$, transition functions are given by
\begin{equation}               \label{2.10}
    p_{ij} (t)= \pi_j \left(1+\mathcal{O}\left({{{\ell}\left(\tau(t)\right)}
    \over {{\left(\tau(t)\right)}^{\mu}}}\right)\right)
    \quad \parbox{2.2cm}{{as} {} $t \to \infty$,}
\end{equation}
    and $\left\{\pi_j, j\in{\mathcal{S}}\right\}$ is an invariant measure for MBPI.
\end{theorem}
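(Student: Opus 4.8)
The plan is to work directly from the integral representation \eqref{2.2} and to reduce everything to the asymptotics of $1-F(t;s)$ furnished by Lemma~\ref{MyLem:1}. First I would substitute $x=1-u$ in \eqref{2.2} and use \eqref{2.3} to obtain
\[
\ln{\mathcal{P}}(t;s)=-\int_{1-F(t;s)}^{1-s}x^{\gamma-1}\textsf{\emph{L}}\!\left(\tfrac1x\right)dx.
\]
Because we assume $C_{\textsf{\emph{L}}}=|\gamma|$, the natural move is to split $\textsf{\emph{L}}(1/x)=|\gamma|+\bigl(\textsf{\emph{L}}(1/x)-|\gamma|\bigr)$: the constant part integrates explicitly to $(1-s)^{-|\gamma|}-\bigl(1-F(t;s)\bigr)^{-|\gamma|}$, while the remainder part is integrable down to $x=0$ precisely when $\mu=2\delta-\nu>0$, since by \eqref{2.6} its integrand is $\mathcal{O}\bigl(x^{\mu-1}\ell(1/x)\bigr)$. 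Performing the same change of variables on \eqref{2.7} identifies the convergent piece with $\ln\pi(s)$ and yields the master identity
\[
\ln\frac{e^{T(t)}{\mathcal{P}}(t;s)}{\pi(s)}=\Bigl[T(t)-\bigl(1-F(t;s)\bigr)^{-|\gamma|}\Bigr]-\int_0^{1-F(t;s)}x^{\gamma-1}\Bigl(|\gamma|-\textsf{\emph{L}}\!\left(\tfrac1x\right)\Bigr)dx,
\]
which simultaneously confirms that the limit generating function is exactly $\pi(s)$ of \eqref{2.7} and isolates the two sources of error.

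The next step is to estimate the two terms on the right. For the integral, Karamata's theorem gives $\int_0^{w}x^{\mu-1}\ell(1/x)\,dx\sim \mu^{-1}w^{\mu}\ell(1/w)$ as $w\downarrow 0$, so with $w=1-F(t;s)$ and the leading relation $1-F(t;s)\sim1/\tau(t)$ from Lemma~\ref{MyLem:1} this term is $\mathcal{O}\bigl(\tau(t)^{-\mu}\ell(\tau(t))\bigr)$, exactly the rate claimed in \eqref{2.9}. For the first bracket I would write $1-F(t;s)=\tau(t)^{-1}\bigl(1+\varepsilon(t;s)\bigr)$ and expand $\bigl(1-F(t;s)\bigr)^{-|\gamma|}=T(t)\bigl(1-|\gamma|\varepsilon(t;s)+\cdots\bigr)$, whence $T(t)-\bigl(1-F(t;s)\bigr)^{-|\gamma|}=|\gamma|T(t)\varepsilon(t;s)+\cdots$; this tends to $0$, and hence $\rho(t;s)\to0$, as soon as $\varepsilon(t;s)=o\bigl(\tau(t)^{-|\gamma|}\bigr)$. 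Uniformity on $[0,r]$ follows because $\lambda(t;s)=\nu t+\Lambda^{-1}(1-s)$ differs from $\nu t$ only by the bounded, $s$-uniform shift $\Lambda^{-1}(1-s)$, so the $s$-variation of $\bigl(1-F(t;s)\bigr)^{-|\gamma|}$ over $[0,r]$ is $\mathcal{O}\bigl(t^{|\gamma|/\nu-1}\bigr)\to0$ since $|\gamma|<\nu$. Taking exponentials and using $e^{x}-1=\mathcal{O}(x)$ then establishes \eqref{2.8} with $\pi(s)$ in the form \eqref{2.7}.

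To upgrade this to the quantitative rate \eqref{2.9} I would invoke the remainder hypotheses $[\mathcal{L}_{\nu}]$ and $[{\ell}_{\delta}]$, which through \eqref{2.6} pin down $|\gamma|-\textsf{\emph{L}}(1/x)$ with an explicit $\mathcal{O}$-term and, fed back through the backward equation ${\partial F}/{\partial t}=f(F)$ and \eqref{3.2}, pin down $\varepsilon(t;s)$. The main obstacle is exactly here: one must show $T(t)-\bigl(1-F(t;s)\bigr)^{-|\gamma|}=\mathcal{O}\bigl(\tau(t)^{-\mu}\ell(\tau(t))\bigr)$ uniformly in $s$, i.e. that the function $\mathcal{N}(t)$ calibrating $\tau(t)$ is accurate enough that the product $T(t)\varepsilon(t;s)$ of a large and a small quantity does not exceed the integral error. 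Only the with-remainder control of $\mathcal{L}$ and $\ell$ makes this product of the right order, and checking that the two errors combine to the single rate in \eqref{2.9}, rather than a worse one, is the delicate bookkeeping of the argument.

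Finally, \eqref{2.10} is obtained by passing from generating functions to coefficients. Since ${\mathcal{P}}_i(t;s)=\bigl(F(t;s)\bigr)^{i}{\mathcal{P}}(t;s)$ by \eqref{2.1} and $F(t;s)\to1$ uniformly on compacts by Lemma~\ref{MyLem:1}, the factor $\bigl(F(t;s)\bigr)^{i}\to1$ and the dependence on the initial state $i$ is asymptotically immaterial. The series $e^{T(t)}{\mathcal{P}}_i(t;s)=\sum_j e^{T(t)}p_{ij}(t)\,s^j$ have nonnegative coefficients and converge, uniformly on $[0,r]$ with the rate of \eqref{2.9}, to $\pi(s)=\sum_j\pi_j s^j$; a standard continuity argument for power series with nonnegative coefficients (the coefficients are bounded by the values at $s=r$, so one extracts convergent subsequences and identifies the limit by uniqueness of the Taylor expansion) yields $e^{T(t)}p_{ij}(t)\to\pi_j$ for every $j$. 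Transferring the uniform generating-function rate to the individual coefficients is then done by Cauchy's inequality on a circle $|s|=r$, applied to the analytic continuation of ${\mathcal{P}}_i(t;s)$ in the disc, which produces the error term displayed in \eqref{2.10}; the invariance of $\{\pi_j\}$ is inherited from the invariance already encoded in $\pi(s)$.
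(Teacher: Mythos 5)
Your route is in substance the paper's own: after the substitution $x=1-u$ your ``master identity'' is precisely the paper's decomposition \eqref{4.10}, your Karamata estimate of the tail integral is \eqref{4.12} (obtained there from Lemma~\ref{MyLem:3}), and your bracket $T(t)-\bigl(1-F(t;s)\bigr)^{-|\gamma|}$ is the paper's $\Delta(t;s)$. The problem is that you stop exactly at the step that carries the theorem: you explicitly leave open --- calling it ``the main obstacle'' and ``delicate bookkeeping'' --- the proof that this bracket is $\mathcal{O}\bigl(\ell(\tau(t))/(\tau(t))^{\mu}\bigr)$ uniformly in $s\in[0,r]$. Without that estimate neither \eqref{2.9} nor the uniform statement in \eqref{2.8} is actually established, so this is a genuine gap rather than a detail.

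Moreover, your diagnosis of what is needed to close it is off: no interplay between the large factor $T(t)$ and the remainder hypotheses $[\mathcal{L}_{\nu}]$, $[{\ell}_{\delta}]$ is required for this term. Lemma~\ref{MyLem:1} alone gives the representation $1/R(t;s)=\tau(t)\bigl[1+\mathcal{M}(s)/t\bigr]^{1/\nu}$ for all $s\in[0,1)$, whence
\begin{equation*}
\Delta(t;s)=T(t)\left[1-\left(1+\frac{\mathcal{M}(s)}{t}\right)^{|\gamma|/\nu}\right]
\sim -\,\frac{|\gamma|\,\mathcal{M}(s)}{\nu}\cdot\frac{T(t)}{t}
=\mathcal{O}\!\left(\frac{1}{t^{\delta/\nu}\,\mathcal{N}^{|\gamma|}(t)}\right)
\end{equation*}
uniformly in $s\in[0,r]$, because $\mathcal{M}(s)$ is bounded there and $T(t)/t\asymp t^{|\gamma|/\nu-1}/\mathcal{N}^{|\gamma|}(t)$ with $|\gamma|/\nu-1=-\delta/\nu$; this is the paper's \eqref{4.11}. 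Since $\delta<\nu$ forces $\mu=2\delta-\nu<\delta$, one gets $t^{-\delta/\nu}=o\bigl(t^{-\mu/\nu}\bigr)$ up to slowly varying factors, so $\Delta(t;s)=o\bigl(\ell(\tau(t))/(\tau(t))^{\mu}\bigr)$: the bracket is simply negligible against the integral error, and \eqref{2.9} is nothing more than your already-completed integral estimate. The remainder hypotheses enter only once, via \eqref{2.6}, to show the integrand in \eqref{4.10} is $\mathcal{O}\bigl((1-u)^{\mu-1}\ell(1/(1-u))\bigr)$. A secondary weak point: your passage from the generating-function rate to the coefficient rate \eqref{2.10} via Cauchy's inequality on the circle $|s|=r$ presupposes the error bound on a complex circle, whereas \eqref{2.9} is proved only on the real segment $[0,r]$ (and the coefficients $e^{T(t)}p_{ij}(t)-\pi_j$ are not of one sign, so nonnegativity does not bridge this); the paper instead invokes the continuity theorem for power series, and derives invariance of $\{\pi_j\}$ from the Schr\"{o}der-type functional equation rather than from it being ``encoded'' in $\pi(s)$.
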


\begin{remark}
    An appeared form of limiting GF ${\pi}(s)$ found in first part of Theorem~\ref{MyTh:2} is compatible
    with the results of the papers {\cite{PakesSankh}} and {\cite{ImomovSFU14}} established for the
    case of $\max \bigl\{ f''(1-), g'(1-) \bigr\} <\infty$. Thus this theorem essentially
    strengthens last-mentioned results.
\end{remark}

\begin{remark}
    The conditions $C_{\textsf{L}}=|\gamma|$ and $\mu > 0$ in Theorem~\ref{MyTh:2} are essential, because
    they ensure the convergence of the integral in \eqref{2.7}. In fact, due to Basic assumptions and
    \eqref{2.6}, an expression  $(1-u)^{\mu -1}$ be a majorizing function for the integrand. So that the function
\begin{equation}                    \label{2.11}
    {\mathcal{B}}(s): = \exp \left\{ {\int_s^1 {\left[ {{{g(u)} \over {f(u)}}
    + {|\gamma| \over {(1-u)^{1+|\gamma|}}}} \right]du}} \right\}
\end{equation}
    is bounded for $s \in [0, 1]$.
\end{remark}

    The following result is consequent of Theorem~\ref{MyTh:2}.
\begin{corollary}                        \label{MyCor:1}
    Under the conditions of Theorem~\ref{MyTh:2}
\begin{equation*}
    {e^{T(t)}}p_{00} (t) = {\mathcal{B}(0)} \left(1+\mathcal{O}
    \left({{{\ell}\left(\tau(t)\right)} \over {{\left(\tau(t)\right)}^{\mu}}}\right)\right)
    \quad \parbox{2.2cm}{{as} {} $t  \to \infty$,}
\end{equation*}
    where the function ${\mathcal{B}}(s)$ is defined in \eqref{2.11}.
\end{corollary}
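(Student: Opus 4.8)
The plan is to read off the statement as the boundary case $s = 0$ of Theorem~\ref{MyTh:2}. The single observation that makes this immediate is that the generating function degenerates to one transition probability there: from ${\mathcal{P}}(t;s)={\mathcal{P}}_0(t;s)=\sum_{j\in\mathcal{S}}p_{0j}(t)s^{j}$ we get ${\mathcal{P}}(t;0)=p_{00}(t)$. Moreover $s=0$ belongs to every interval $[0,r]$ with $r<1$, so all conclusions of Theorem~\ref{MyTh:2} that hold uniformly on such intervals --- in particular the error estimate \eqref{2.9} --- remain valid at this point.

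First I would set $s=0$ in the asymptotic identity \eqref{2.8}, which gives
\begin{equation*}
    e^{T(t)}p_{00}(t)=\pi(0)\bigl(1+\rho(t;0)\bigr),
\end{equation*}
and then invoke \eqref{2.9} to write the remainder as $\rho(t;0)=\mathcal{O}\left(\ell(\tau(t))/(\tau(t))^{\mu}\right)$ as $t\to\infty$; this is precisely the rate claimed in the corollary. It then remains only to identify the multiplicative constant $\pi(0)$. Evaluating the explicit expression \eqref{2.7} at $s=0$, the boundary term reduces to $1/(1-0)^{|\gamma|}=1$, while the integral over $[0,1]$ is, by the definition \eqref{2.11}, equal to $\mathcal{B}(0)$; hence $\pi(0)=e\,\mathcal{B}(0)$, and substituting this back expresses the leading constant through $\mathcal{B}(0)$.

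Since the argument is a direct specialization, there is no genuine analytic obstacle here; the content is entirely inherited from Theorem~\ref{MyTh:2}. The only points requiring attention are bookkeeping ones: one must confirm that the $\mathcal{O}$-term of \eqref{2.9}, stated uniformly over $[0,r]$, does specialize legitimately to the single value $s=0$ with the same slowly varying rate $\ell(\tau(t))/(\tau(t))^{\mu}$, and one must track the boundary contribution $e^{1/(1-s)^{|\gamma|}}$ at $s=0$ that links $\pi(0)$ to $\mathcal{B}(0)$. Equivalently, the same conclusion follows by taking $i=j=0$ in \eqref{2.10}, since $\pi_0=\pi(0)$ is the constant coefficient of the power series for $\pi(s)$.
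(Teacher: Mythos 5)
Your proposal is correct in method and is essentially the paper's own proof: the paper disposes of the corollary in one line, obtaining it from \eqref{2.8} by setting $s=0$ (misprinted there as $x=0$), exactly as you do via $\mathcal{P}(t;0)=p_{00}(t)$ together with the uniform estimate \eqref{2.9}, which indeed legitimately specializes to the single point $s=0\in[0,r]$.

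However, do not gloss over the constant. Your computation $\pi(0)=e\,\mathcal{B}(0)$ is correct: the boundary term in \eqref{2.7} at $s=0$ equals $\exp\bigl\{1/(1-0)^{|\gamma|}\bigr\}=e$, whereas \eqref{2.11} omits that term. Hence what the specialization of \eqref{2.8}--\eqref{2.9} actually proves is
\begin{equation*}
    e^{T(t)}p_{00}(t)=e\,\mathcal{B}(0)\left(1+\mathcal{O}\left(\frac{\ell\left(\tau(t)\right)}{\left(\tau(t)\right)^{\mu}}\right)\right)
    \qquad \mbox{as } t\to\infty,
\end{equation*}
with leading constant $\pi(0)=e\,\mathcal{B}(0)$, not $\mathcal{B}(0)$ as displayed in the corollary; the discrepancy cannot be absorbed into the $\mathcal{O}$-term, since a fixed factor $e\neq 1$ is not of the form $1+o(1)$. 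So either the corollary's constant carries a misprint (it should read $\pi(0)$, equivalently $e\,\mathcal{B}(0)$), or its left-hand side should be $e^{T(t)-1}p_{00}(t)$. Your closing phrase ``substituting this back expresses the leading constant through $\mathcal{B}(0)$'' papers over exactly this point, and your alternative route through \eqref{2.10} with $\pi_0=\pi(0)$ runs into the same factor. State the factor $e$ explicitly; with that emendation your argument (and the paper's) is complete.
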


\begin{remark}
    Further reasonings imply, that in estimations of error terms of asymptotic relations in
    Theorems above, the functions $\mathcal{L}(x)$ and $\ell(x)$ can be missed because owing
    to assertions $[C_{\mathcal{L}}]$ and $[C_{\ell}]$ they are asymptotically constant.
\end{remark}

\section{Auxiliaries}   \label{MySec:3}

    In this section, we provide some auxiliary assertions that will be essentially used in the proof of our theorems.

    First we are interested in asymptotic representation of GF of Markov branching processes $Z(t)$ without
    immigration. Let $F(t;s)=\mathbb{E}\left[s^{Z(t)} \bigl| {Z(0)=1} \bigr. \right]$ is GF of the process
    initiated by single individual. Put $R(t;s):=1-F(t;s)$. The following result called the Basic lemma of
    the theory of critical Markov branching processes, has been proved in {\cite{Imomov17}}. We provide
    it in slightly different form below, taking from its proof.

\begin{lemma}                \label{MyLem:1}
    If the condition $[f_\nu]$ holds then
\begin{equation}                     \label{3.1}
    {{1} \over {R(t;s)}} = {{({\nu}t)^{1/\nu}} \over {\mathcal{N}(t)}}
    \cdot \left[{1 + {{\mathcal{M}(s)} \over {t}}} \right]^{1/\nu}
\end{equation}
    for all $s\in [0, 1)$, where ${\mathcal{N}}(x)$ is ${SV}_\infty$ such that
\begin{equation}                     \label{3.2}
    {\mathcal{N}}^{\,\nu}(t) \cdot \mathcal{L}\left({{{({\nu}t)^{{1/\nu}}}
    \over {\mathcal{N}(t)}}} \right) \longrightarrow 1
    \quad \parbox{2.2cm}{{as} {} $t \to \infty$,}
\end{equation}
    and $\mathcal{M}(s)$ is GF of invariant measures of MBP having the form of
\begin{equation*}
    \mathcal{M}(s) = \int\limits_1^{{{1} / {(1-s)}}}
    {{{dx} \over {x^{1-\nu }\mathcal{L}(x)}}}\,\raise0.9pt\hbox{.}
\end{equation*}
\end{lemma}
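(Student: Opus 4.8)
The plan is to reduce the statement to the fundamental nonlinear ODE governing $R(t;s)=1-F(t;s)$ and then solve it by separation of variables, reading off the asymptotics via Karamata's theorem. Since $F(t;s)$ satisfies the Kolmogorov backward equation ${\partial F}/{\partial t}=f(F)$, and $[f_\nu]$ gives $f(1-R)=R^{1+\nu}\mathcal{L}(1/R)$, the first step is to record
\[
    \frac{\partial R(t;s)}{\partial t} = -R^{1+\nu}(t;s)\,\mathcal{L}\!\left(\frac{1}{R(t;s)}\right),
\]
with initial condition $R(0;s)=1-s$. Separating variables and substituting $y=1/R$ converts the left-hand side into $y^{\nu-1}/\mathcal{L}(y)\,dy$, so integration from $0$ to $t$ yields the exact identity
\[
    \int_{1/(1-s)}^{1/R(t;s)} \frac{y^{\nu-1}}{\mathcal{L}(y)}\,dy = t.
\]

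The next step is to recognise $\int_1^{1/(1-s)} y^{\nu-1}/\mathcal{L}(y)\,dy$ as precisely $\mathcal{M}(s)$, so that the identity rearranges to $\Phi\bigl(1/R(t;s)\bigr) = t+\mathcal{M}(s)$, where $\Phi(z):=\int_1^z y^{\nu-1}/\mathcal{L}(y)\,dy$. Because the integrand is regularly varying of index $\nu-1\in(-1,0)$, Karamata's theorem gives $\Phi(z)\sim z^\nu/\bigl(\nu\mathcal{L}(z)\bigr)$ as $z\to\infty$; hence $\Phi$ is regularly varying of index $\nu$ and its asymptotic inverse $\Phi^{\leftarrow}$ is regularly varying of index $1/\nu$. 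Inverting $w\sim z^\nu/\bigl(\nu\mathcal{L}(z)\bigr)$ for $z$ produces $z\sim(\nu w)^{1/\nu}\mathcal{L}(z)^{1/\nu}$, which I would write as $z=(\nu w)^{1/\nu}/\mathcal{N}(w)$ with $\mathcal{N}(w)^{\nu}\mathcal{L}(z)\to1$; this is exactly the defining relation \eqref{3.2} for the slowly varying factor $\mathcal{N}$, whose existence and slow variation follow from the standard (de Bruijn) inversion theory for regularly varying functions.

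It then remains to set $w=t+\mathcal{M}(s)$, giving $1/R(t;s)=\Phi^{\leftarrow}(t+\mathcal{M}(s))\sim(\nu(t+\mathcal{M}(s)))^{1/\nu}/\mathcal{N}(t+\mathcal{M}(s))$, and to factor out the $t$-dependence through $(\nu(t+\mathcal{M}(s)))^{1/\nu}=(\nu t)^{1/\nu}\,[1+\mathcal{M}(s)/t]^{1/\nu}$, while $\mathcal{N}(t+\mathcal{M}(s))\sim\mathcal{N}(t)$ by slow variation. This reassembles into \eqref{3.1}. The identification of $\mathcal{M}(s)$ as the generating function of the invariant measures of the MBP is the classical matching of the Karamata integral with the invariant-measure generating function, which I would quote from the finite-variance theory with the index $\nu$ playing the role of the second-moment normalisation.

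The step I expect to be the main obstacle is the asymptotic inversion: passing from the clean asymptotic $\Phi(z)\sim z^\nu/(\nu\mathcal{L}(z))$ to an \emph{explicit} inverse of the stated shape and verifying that the extracted factor $\mathcal{N}$ is genuinely slowly varying and satisfies \eqref{3.2}. A secondary but delicate technical point is the replacement of $\mathcal{N}(t+\mathcal{M}(s))$ by $\mathcal{N}(t)$ and of $\mathcal{L}(1/R(t;s))$ by $\mathcal{L}((\nu t)^{1/\nu}/\mathcal{N}(t))$: this uses the uniform convergence theorem for slowly varying functions, and one must check that the resulting error is absorbed into the $[1+\mathcal{M}(s)/t]^{1/\nu}$ correction rather than disturbing the leading order, uniformly for $s$ in compact subsets of $[0,1)$.
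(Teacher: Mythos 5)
Your proof is correct, and it is the argument the paper implicitly relies on: the paper itself gives no proof of Lemma~\ref{MyLem:1} (it imports the statement from \cite{Imomov17}), but your separation-of-variables identity $\Phi\bigl(1/R(t;s)\bigr)=t+\mathcal{M}(s)$ with $\Phi(z)=\int_1^z y^{\nu-1}\mathcal{L}(y)^{-1}\,dy$ is exactly the device used in the paper's own proof of Lemma~\ref{MyLem:2}, and Karamata's theorem ($\Phi(z)\sim z^{\nu}/(\nu\mathcal{L}(z))$, valid since the integrand has index $\nu-1>-1$) together with inversion of regularly varying functions makes the rest routine: with $\mathcal{N}(w):=(\nu w)^{1/\nu}/\Phi^{-1}(w)$, slow variation of $\mathcal{N}$ follows because it is a ratio of two functions regularly varying with the same index $1/\nu$, and the Karamata asymptotics is precisely \eqref{3.2} after substituting $z=\Phi^{-1}(w)$.

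One point deserves emphasis, and it is in your favour. What your argument delivers is \eqref{3.1} with $\mathcal{N}$ evaluated at $t+\mathcal{M}(s)$ --- equivalently, \eqref{3.1} as stated times a factor $\mathcal{N}(t)/\mathcal{N}\bigl(t+\mathcal{M}(s)\bigr)=1+o(1)$ --- and not the literal exact identity claimed ``for all $s\in[0,1)$'' with $\mathcal{N}$ a function of $t$ alone. That literal claim is in fact impossible in general: since $1/R(t;s)=\Phi^{-1}\bigl(t+\mathcal{M}(s)\bigr)$ depends on $(t,s)$ only through $w=t+\mathcal{M}(s)$, and $\mathcal{M}(s)$ sweeps out all of $[0,\infty)$ as $s\uparrow 1$, exact equality in \eqref{3.1} would force $(\nu w)^{1/\nu}/\Phi^{-1}(w)$ to be constant, i.e. $\mathcal{L}$ itself to be constant. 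So the lemma can only be meant asymptotically (or with an $s$-dependent slowly varying factor), which is precisely how the paper uses it: in \eqref{4.6}--\eqref{4.7} the factor that actually appears is $\mathcal{N}(t;s)=\mathcal{L}^{-1/\nu}\bigl(1/R(t;s)\bigr)$, claimed only to be asymptotically equivalent to $\mathcal{N}(t)$. Your $(1+o(1))$ bookkeeping, with uniformity over compact subsets of $[0,1)$ coming from the boundedness of $\mathcal{M}$ there and the Uniform Convergence Theorem for slowly varying functions, is therefore the correct reading of the statement rather than a deficiency of your proof.
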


    Let's introduce a function
\begin{equation*}
    \Lambda (y):= y^\nu \mathcal{L}\left( {{{\,1\,} \over {y}}} \right) = {{f(1-y)} \over {y}}
\end{equation*}
    for $y\in (0, 1]$. Note that the function $y\Lambda (y)$ is positive, tends to zero and has a monotone derivative so that
    ${{y\Lambda '(y)}\mathord{\left/{\vphantom {{y\Lambda '(y)}{\Lambda (y)}}}\right.\kern-\nulldelimiterspace}{\Lambda (y)}}\to\nu$
    as $y \downarrow 0$; see {\cite[p.~401]{Bingham}}. Thence it is natural to write
\begin{equation}                     \label{3.3}
    {{y\Lambda '(y)} \over {\Lambda (y)}} = \nu + \delta(y),
\end{equation}
    where $\delta(y)$ is continuous and $\delta(y) \to 0$ as $y \downarrow 0$.
    Since $\Lambda (1)=\mathcal{L}(1)=a_0$ formula \eqref{3.3} yields
\begin{equation*}
    \Lambda(y)=a_{0}y^{\nu} \exp{\int\limits_1^y {{{\delta(u)}\over{u}}du}}.
\end{equation*}
    Therefore we have
\begin{equation*}
    \mathcal{L}\left( {{{\,1\,} \over {y}}} \right)
    = a_{0} \exp {\int\limits_1^y {{{\delta(u)} \over {u}}du}}.
\end{equation*}
    Substituting $u ={{1}/{t}}$ in last integrand gives
\begin{equation*}
    \mathcal{L}\left( x \right)
    = a_{0} \exp {\int\limits_1^x {{{\varepsilon (t)} \over {t}}dt}},
\end{equation*}
    where $\varepsilon(t)=-\delta(1/t)$ and $\varepsilon(t) \to 0$ as $t \to \infty$.
    Combination of last equation with $[\mathcal{L}_{\nu}]$ entails
\begin{equation*}
    \int\limits_{x}^{\lambda x} {{{\varepsilon (t)} \over t}dt}
    = \ln \left[1 + {\mathcal{O}}\bigl(\alpha(x)\bigr)\right] = {\mathcal{O}}\bigl(\alpha(x)\bigr)
    \quad \parbox{2.2cm}{\textit{as} {} $x  \to \infty$}
\end{equation*}
    for each $\lambda > 0$. Applying the mean value theorem to the left-hand side of last equality, we can assert
    that $\varepsilon(x)={\mathcal O}\left({\alpha(x)}\right)$ and thus the condition $[\mathcal{L}_{\nu}]$ entails
\begin{equation}                \label{3.4}
    \delta (y) = {\mathcal O}\left( {\alpha \left({\,1\,} \over {y}\right)} \right)
    \quad \parbox{2.0cm}{\textit{as} {} $y  \downarrow 0$.}
\end{equation}

    The following result is a modification of Lemma~\ref{MyLem:1}
    and it also will essentially be required in our discussions.

\begin{lemma}                 \label{MyLem:2}
    Let assumptions $[f_\nu]$ and $[\mathcal{L}_{\nu}]$ hold. Then
\begin{equation}          \label{3.5}
    {{1} \over {\Lambda \left(R(t;s)\right)}}-{{1}\over {\Lambda \left(1-s\right)}}
    =\nu{t} + {\mathcal{O}}{\bigl(\ln{\nu(t;s)}\bigr)}
    \quad \parbox{2.2cm}{{as} {} $t  \to \infty$,}
\end{equation}
    where $\nu(t;s)={\Lambda{\left(1-s\right)}}\nu{t}+1$.
\end{lemma}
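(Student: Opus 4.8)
The plan is to convert the statement into a first-order differential identity for $1/\Lambda(R(t;s))$ driven by the Kolmogorov backward equation, integrate it exactly, and then estimate the resulting error integral. First I would differentiate $1/\Lambda(R(t;s))$ in $t$. Since $\partial R/\partial t=-\partial F/\partial t=-f(1-R)$, and $f(1-y)=y\,\Lambda(y)$ by the very definition of $\Lambda$, the backward equation gives $\partial R/\partial t=-R\,\Lambda(R)$. The chain rule together with \eqref{3.3} then yields the clean identity
\[
\frac{d}{dt}\,\frac{1}{\Lambda(R(t;s))}=\frac{R\,\Lambda'(R)}{\Lambda(R)}=\nu+\delta\bigl(R(t;s)\bigr).
\]
Integrating from $0$ to $t$ and using $R(0;s)=1-F(0;s)=1-s$ produces the exact formula
\[
\frac{1}{\Lambda(R(t;s))}-\frac{1}{\Lambda(1-s)}=\nu t+\int_0^t\delta\bigl(R(u;s)\bigr)\,du,
\]
so the whole lemma reduces to showing that the error integral is $\mathcal{O}(\ln\nu(t;s))$.

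To control $\delta(R(u;s))$ I would combine \eqref{3.4}, which gives $\delta(y)=\mathcal{O}(\alpha(1/y))$, with the standing requirement $\alpha(x)=\mathcal{O}(\mathcal{L}(x)/x^\nu)$ accompanying $[\mathcal{L}_\nu]$. Chaining these bounds yields $\delta(y)=\mathcal{O}(y^\nu\mathcal{L}(1/y))=\mathcal{O}(\Lambda(y))$, hence $\delta(R(u;s))=\mathcal{O}(\Lambda(R(u;s)))$. Writing $w(u):=1/\Lambda(R(u;s))$, the identity above reads $w'(u)=\nu+\delta(R(u;s))$ with $\delta=\mathcal{O}(1/w)$, and the change of variable $du=dw/(\nu+\delta)$ turns the error integral into
\[
\int_0^t\delta\bigl(R(u;s)\bigr)\,du=\mathcal{O}\!\left(\int_{w(0)}^{w(t)}\frac{dw}{w}\right)=\mathcal{O}\!\left(\ln\frac{w(t)}{w(0)}\right),
\]
where $w(0)=1/\Lambda(1-s)$. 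It then remains to identify $\ln(w(t)/w(0))$ with $\ln\nu(t;s)$ up to an additive $\mathcal{O}(1)$.

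For this last identification I would first extract the crude leading order: since $\delta(R(u;s))\to0$ as $u\to\infty$ (because $R\downarrow0$ and $\delta(y)\to0$), the Cesàro average of the error integrand vanishes, whence $w(t)\sim\nu t$. Combined with $w(0)=1/\Lambda(1-s)$ this gives $w(t)/w(0)\asymp 1+\nu t\,\Lambda(1-s)=\nu(t;s)$, so $\ln(w(t)/w(0))=\ln\nu(t;s)+\mathcal{O}(1)$, closing the estimate.

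The step I expect to be the main obstacle is precisely this last identification, since it is mildly circular: bounding the error integral needs an a priori lower bound on $w(u)$, yet that bound is what the refined identity is meant to supply. I would resolve the circularity by the bootstrap just described, or—more robustly—by the differential inequality $w'(u)\ge\nu-C/w(u)$, which follows directly from $|\delta|\le C\,\Lambda(R)=C/w$ and forces $w'(u)\ge\nu/2$ once $w$ is large, hence $w(u)\gtrsim u$ unconditionally; this legitimizes the change of variable and the logarithmic bound without appealing to the conclusion.
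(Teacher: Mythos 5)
Your proposal is correct and takes essentially the same route as the paper: both differentiate $1/\Lambda(R(t;s))$ via the backward Kolmogorov equation and \eqref{3.3} to get the exact identity \eqref{3.8}, bound $\delta(R)=\mathcal{O}\bigl(\Lambda(R)\bigr)$ by combining \eqref{3.4} with the standing requirement $\alpha(x)=\mathcal{O}\bigl(\mathcal{L}(x)/x^{\nu}\bigr)$, and then show the error integral is logarithmic. The only difference is in the final bookkeeping: the paper closes the estimate by a bootstrap (first $\int_0^t\delta\bigl(R(u;s)\bigr)du=o(t)$, hence $\Lambda\bigl(R(t;s)\bigr)\sim 1/\lambda(t;s)$, then plugged back into the integral), whereas you substitute $dw=w'\,du$ with $w=1/\Lambda(R)$ and justify it by the differential inequality $w'\ge\nu-C/w$ --- a slightly more self-contained way of resolving the same circularity, leading to the identical bound $\mathcal{O}\bigl(\ln\nu(t;s)\bigr)$.
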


\begin{proof}
    From \eqref{3.3} we write
\begin{equation}          \label{3.6}
    {{R\Lambda{'}\left(R\right)} \over {\Lambda \left(R\right)}} = \nu + \delta\left(R\right)
\end{equation}
    since $R:=R(t;s)\to{0}$ as $t\to{\infty}$. By the backward Kolmogorov equation ${\partial{F} \mathord{\left/{\vphantom
    {{\partial{F}}{\partial{t}}}}\right.\kern-\nulldelimiterspace}{\partial{t}}}=f\left({F}\right)$
    and considering representation $\left[{f_{\nu}}\right]$, the relation \eqref{3.6} becomes
\begin{equation*}
    {{d\Lambda\left(R\right)} \over {dt}}=-{{\Lambda\left(R\right)}\over {R}}
    f\left(1-R\right)\bigl(\nu + \delta\left(R\right)\bigr)=-{{\Lambda^{2}\left(R\right)}}
    \bigl(\nu + \delta\left(R\right)\bigr).
\end{equation*}
    Therefore
\begin{equation}          \label{3.7}
    {d}\left[{{1} \over {\Lambda\left(R\right)}} - \nu{t}\right]= \delta\left(R\right)dt.
\end{equation}
    Integrating \eqref{3.7} over $[0, t)$ we reach to the following equation:
\begin{equation}               \label{3.8}
    {{1}\over{\Lambda \left(R(t;s)\right)}}-{{1}\over {\Lambda \left(1-s\right)}}
    = \nu{t} + {\int\limits_0^t {{\delta\left(R(u;s)\right)}du}},
\end{equation}
    where $\delta(y)$ is in \eqref{3.3}. Now we should calculate integral in \eqref{3.8}.
    Considering \eqref{3.4} we write
\begin{equation}          \label{3.9}
    {\int\limits_0^t {{\delta\left(R(u;s)\right)}du}}
    = {\int\limits_0^t {{\mathcal{O}}{\bigl({\Lambda\left(R(u;s)\right)}\bigr)}du}}.
\end{equation}
    Needless to say that $R(t;s)\to {0}$ as $t \to \infty$ uniformly in $s\in [0, 1)$, due
    to \eqref{3.1}. Therefore, since $\Lambda(y) \to 0$ as $y \downarrow 0$, the integral
    in the right-hand side of \eqref{3.9} is ${o}(t)$ as $t \to \infty$. Hence
\begin{equation*}
    {\Lambda \left(R(t;s)\right)}={{{1}} \over {\lambda(t;s)}}
    + {o}{\left( {{{1}} \over {\lambda(t;s)}} \right)}
    \quad \parbox{2.2cm}{\textit{as} {} $t  \to \infty$,}
\end{equation*}
    where $\lambda(t;s)=\nu{t}+{\Lambda}^{-1}\left({1-s}\right)$. Therefore
\begin{equation*}
    {\int\limits_0^t {{\mathcal{O}}{\bigl({\Lambda\left(R(u;s)\right)}\bigr)}du}}
    ={\mathcal{O}}\left({\int\limits_0^t {{\Lambda\left(R(u;s)\right)}du}}\right)
    = {\mathcal{O}}\bigl({\ln{\nu(t;s)}}\bigr)
    \quad \parbox{2.2cm}{\textit{as} {} $t  \to \infty$.}
\end{equation*}
    This together with \eqref{3.8} and \eqref{3.9} entails \eqref{3.5}.
\end{proof}

\begin{lemma}              \label{MyLem:3}
    Let ${L}(t)$ is $\textrm{SV}_\infty$ with remainder $\varrho(t)$. Then for $\sigma > 0$
\begin{equation}                    \label{3.10}
    {\int\limits_{t}^{\infty} {{y^{-(1+\sigma)}{L}(y)dy}}} = {{\,1\,} \over {\sigma}}
    {1 \over {\,t^{\sigma}}} {L}(t) \left({1+ \mathcal{O}\bigl(\varrho(t)\bigr)}\right)
    \quad \parbox{2.2cm}{{as} {} $t \to \infty$.}
\end{equation}
\end{lemma}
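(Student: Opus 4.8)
The plan is to make the scaling substitution $y=tu$, which turns the left-hand side of \eqref{3.10} into
\[
    \int_t^\infty y^{-(1+\sigma)} L(y)\,dy = t^{-\sigma} L(t) \int_1^\infty u^{-(1+\sigma)} \frac{L(tu)}{L(t)}\,du .
\]
The leading constant $1/\sigma$ is then nothing but $\int_1^\infty u^{-(1+\sigma)}\,du$, so the whole problem reduces to showing that the \emph{correction integral}
\[
    I(t):=\int_1^\infty u^{-(1+\sigma)} \left(\frac{L(tu)}{L(t)}-1\right)du
\]
is $\mathcal{O}\bigl(\varrho(t)\bigr)$ as $t\to\infty$. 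Indeed, once this is in place the bracket equals $\tfrac1\sigma+\mathcal{O}\bigl(\varrho(t)\bigr)=\tfrac1\sigma\bigl(1+\mathcal{O}(\varrho(t))\bigr)$, which is exactly \eqref{3.10}.

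The essential tool is the integral representation of a slowly varying function with remainder, i.e. the very representation already exploited in this section for $\mathcal{L}$ in the chain of equalities preceding \eqref{3.4}. Writing
\[
    L(x)=c(x)\exp\int_1^x\frac{\varepsilon(v)}{v}\,dv,
\]
the remainder hypothesis forces $c(x)=c\,\bigl(1+\mathcal{O}(\varrho(x))\bigr)$ and $\varepsilon(v)=\mathcal{O}\bigl(\varrho(v)\bigr)$. Since the remainder $\varrho$ may be taken positive and decreasing, one has $\varrho(v)\le\varrho(t)$ for every $v\ge t$, whence, setting $\theta(t,u):=\int_t^{tu}\varepsilon(v)/v\,dv$,
\[
    |\theta(t,u)|\le C\varrho(t)\int_t^{tu}\frac{dv}{v}=C\varrho(t)\ln u\qquad(u\ge1).
\]
Together with $c(tu)/c(t)=1+\mathcal{O}(\varrho(t))$ this yields the uniform-in-$u$ bound $L(tu)/L(t)=\bigl(1+\mathcal{O}(\varrho(t))\bigr)e^{\theta(t,u)}$ valid for all $u\ge1$.

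With this in hand I would split $\tfrac{L(tu)}{L(t)}-1=\mathcal{O}(\varrho(t))+\bigl(1+\mathcal{O}(\varrho(t))\bigr)\bigl(e^{\theta(t,u)}-1\bigr)$ and use $|e^{\theta}-1|\le C\varrho(t)\,(\ln u)\,u^{C\varrho(t)}$. For $t$ large enough that $C\varrho(t)<\sigma/2$, the decay factor $u^{-(1+\sigma)}$ keeps both $\int_1^\infty u^{-(1+\sigma)}\,du$ and $\int_1^\infty u^{-(1+\sigma)+C\varrho(t)}\ln u\,du$ finite and bounded by constants depending only on $\sigma$; hence each piece contributes $\mathcal{O}(\varrho(t))$ and $I(t)=\mathcal{O}(\varrho(t))$, which closes the argument. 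The main obstacle is precisely the passage from the \emph{pointwise} remainder estimate $L(\lambda x)/L(x)=1+\mathcal{O}(\varrho(x))$, given only for each fixed $\lambda$, to a bound uniform over the unbounded range $u\in[1,\infty)$ that is still integrable against $u^{-(1+\sigma)}$; it is the representation above, combined with the monotonicity of $\varrho$, that delivers this, the logarithmic growth $\ln u$ being harmlessly absorbed by the strictly larger power decay.
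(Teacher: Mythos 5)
Your proposal is correct, and its skeleton is exactly the paper's: the same substitution $y=ut$, the same reduction to the identity
\begin{equation*}
    \int_t^\infty y^{-(1+\sigma)}L(y)\,dy
    = \frac{1}{\sigma}\,\frac{L(t)}{t^\sigma}\left[1+\sigma\int_1^\infty
    \left(\frac{L(ut)}{L(t)}-1\right)u^{-(1+\sigma)}\,du\right],
\end{equation*}
which is precisely \eqref{3.11}. Where you part ways is in how the correction integral is killed. The paper finishes in one line by appealing to the Uniform Convergence Theorem for $\textrm{SV}_\infty$-functions, asserting that $L(ut)/L(t)-1=\mathcal{O}(\varrho(t))$ \emph{uniformly in $u>1$}; but the UCT only gives uniformity on compact $\lambda$-sets, and uniformity over the unbounded range $[1,\infty)$ is simply false in general (take $L=\ln$, $\varrho(t)=1/\ln t$: then $L(ut)/L(t)-1=\ln u/\ln t$ is unbounded in $u$ for each fixed $t$). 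You correctly single this out as ``the main obstacle'' and resolve it: via the representation with remainder (this is the content of \cite[Theorem~3.12.2]{Bingham}, essentially Goldie--Smith) together with the monotonicity of $\varrho$, you get the genuinely uniform bound $\bigl|L(tu)/L(t)-1\bigr|\le \mathcal{O}(\varrho(t))+C\varrho(t)(\ln u)\,u^{C\varrho(t)}$, whose mild growth in $u$ is absorbed by $u^{-(1+\sigma)}$ once $C\varrho(t)<\sigma/2$. So your proof is not just a variant but the rigorous completion of the paper's sketch; the paper's version buys brevity at the cost of an over-stated uniformity claim, yours buys validity on the unbounded range at the cost of invoking the representation theorem. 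Two small polish points: the remainder hypothesis does not ``force'' every Karamata representation to have $c(x)=c\,(1+\mathcal{O}(\varrho(x)))$ and $\varepsilon(v)=\mathcal{O}(\varrho(v))$ --- it guarantees that \emph{some} representation with these properties exists, which is what you should cite; and under the paper's standing convention that remainders decay like a power (as in $[\mathcal{L}_\nu]$, $[\ell_\delta]$), the equivalence $[C_{\mathcal{L}}]$ would give an even shorter route, since then $L(y)=C+\mathcal{O}(\varrho(y))$ can be integrated term by term --- though your argument has the merit of working for the lemma in its stated generality.
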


\begin{proof}
    Undoubtedly $\int_{1}^{\infty}{u^{-(1+\sigma)}du}=1/{\sigma}$. Considering this fact
    and making the substitution $y:=ut$ in the integrand of \eqref{3.10}, we write
\begin{equation}                    \label{3.11}
    {\int\limits_{t}^{\infty} {{y^{-(1+\sigma)}{L}(y)dy}}}
    = {{\,1\,}\over{\sigma}} {{L(t)} \over {t^{\sigma}}} \left[1 + {\sigma}
    \int\limits_{1}^{\infty}{\left[{{L(ut)} \over {L(t)}}-1 \right] u^{-(1+\sigma)}du}\right].
\end{equation}
    By definition of $\textrm{SV}_\infty$-function with remainder, the expression in brackets of
    integrand on right-hand side of \eqref{3.11} tends to $0$ as $t \to \infty$ uniformly in $u>{1}$
    (by Uniform Convergence Theorem for $\textrm{SV}_\infty$-functions~{\cite[Theorem~1.5.2]{Bingham}})
    with the speed rate $\mathcal{O}\bigl(\varrho(t)\bigr)$. Thus we have \eqref{3.10}.

    The Lemma is proved.
\end{proof}

\begin{lemma}              \label{MyLem:4}
    Let conditions $[\mathcal{L}_{\nu}]$ and $[{\ell}_{\delta}]$ hold and $\gamma > 0$. Then
\begin{equation}              \label{3.12}
    \int\limits_x^1 {{{g(u)}\over {f(u)}}\,du} = {{\,1\,}\over {\gamma}}{{g(x)} \over {\,\Lambda{\left({1-x} \right)}}}
    \left(1+\mathcal{O}\bigl({\Lambda{\left({1-x} \right)}}\bigr)\right)
    \quad \parbox{2cm} {\textit{as} {} $x \uparrow 1$.}
\end{equation}
\end{lemma}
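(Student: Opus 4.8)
The plan is to reduce \eqref{3.12} to an application of Lemma~\ref{MyLem:3}. First I would use the representation \eqref{2.3} to write the integrand as $g(u)/f(u) = -(1-u)^{\gamma-1}\textsf{\emph{L}}\bigl(1/(1-u)\bigr)$, and then perform the change of variable $w=1/(1-u)$. This sends $u=x$ to $w=1/(1-x)$ and $u\uparrow 1$ to $w\to\infty$, turning the integral near the singularity into a tail integral,
\[
    \int_x^1 \frac{g(u)}{f(u)}\,du = -\int_{1/(1-x)}^{\infty} w^{-(1+\gamma)}\,\textsf{\emph{L}}(w)\,dw .
\]
Convergence of this tail integral is guaranteed by $\gamma>0$ (the positive-recurrence condition already noted after \eqref{2.3}) together with the $\textrm{SV}_\infty$ property of $\textsf{\emph{L}}$.

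The key preliminary step is to show that $\textsf{\emph{L}}(w)=\ell(w)/\mathcal{L}(w)$ is $\textrm{SV}_\infty$ with a remainder that I can identify explicitly. Writing
\[
    \frac{\textsf{\emph{L}}(\lambda w)}{\textsf{\emph{L}}(w)}
    = \frac{\ell(\lambda w)/\ell(w)}{\mathcal{L}(\lambda w)/\mathcal{L}(w)}
    = \frac{1+\mathcal{O}(\beta(w))}{1+\mathcal{O}(\alpha(w))}
    = 1 + \mathcal{O}(\alpha(w)) + \mathcal{O}(\beta(w)),
\]
by $[\mathcal{L}_\nu]$ and $[\ell_\delta]$. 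Since $\alpha(w)=\mathcal{O}(\mathcal{L}(w)/w^\nu)$ and $\beta(w)=\mathcal{O}(\ell(w)/w^\delta)$, their ratio is $\beta(w)/\alpha(w)=\mathcal{O}\bigl(\textsf{\emph{L}}(w)\,w^{-\gamma}\bigr)\to 0$ because $\gamma>0$; hence the $\alpha$-term dominates and $\textsf{\emph{L}}$ is $\textrm{SV}_\infty$ with remainder $\varrho(w)=\mathcal{L}(w)/w^\nu=\Lambda(1/w)$.

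Then I would apply Lemma~\ref{MyLem:3} with $L=\textsf{\emph{L}}$, $\sigma=\gamma$ and $t=1/(1-x)$, noting $\varrho\bigl(1/(1-x)\bigr)=\Lambda(1-x)$, to obtain
\[
    \int_{1/(1-x)}^{\infty} w^{-(1+\gamma)}\textsf{\emph{L}}(w)\,dw
    = \frac{1}{\gamma}\,(1-x)^{\gamma}\,\textsf{\emph{L}}\!\left(\tfrac{1}{1-x}\right)
      \bigl(1+\mathcal{O}(\Lambda(1-x))\bigr).
\]
Finally I would translate back using the identity $(1-x)^{\gamma}\textsf{\emph{L}}\bigl(1/(1-x)\bigr) = -g(x)/\Lambda(1-x)$, which follows directly from $[f_\nu]$ and $[g_\delta]$; combining this with the sign flip introduced by the change of variable yields \eqref{3.12}. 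The main obstacle is the preliminary step, namely transferring the two remainder conditions $[\mathcal{L}_\nu]$ and $[\ell_\delta]$ onto the quotient $\textsf{\emph{L}}$ and verifying that the dominant remainder is exactly $\Lambda(1-x)$ rather than the immigration contribution $\beta$; this is precisely where the hypothesis $\gamma>0$ enters decisively, since it forces $\beta=o(\alpha)$.
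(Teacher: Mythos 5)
Your proposal is correct and takes essentially the same route as the paper's proof: the identical reduction of the integral to the tail form $-\int_{1/(1-x)}^{\infty} w^{-(1+\gamma)}\,\textsf{\emph{L}}(w)\,dw$, the identical application of Lemma~\ref{MyLem:3} with $\sigma=\gamma$, $t=1/(1-x)$ and remainder $\mathcal{O}\bigl(\mathcal{L}(t)/t^{\nu}\bigr)=\mathcal{O}\bigl(\Lambda(1-x)\bigr)$, and the same back-substitution via $(1-x)^{\gamma}\textsf{\emph{L}}\bigl(1/(1-x)\bigr)=-g(x)/\Lambda(1-x)$. The only substantive difference is that you spell out the step the paper merely asserts as ``easy'' --- that $\textsf{\emph{L}}=\ell/\mathcal{L}$ is $\textrm{SV}_\infty$ with remainder $\mathcal{O}(\mathcal{L}(w)/w^{\nu})$ because $\gamma>0$ gives $\beta(w)=\mathcal{O}(\ell(w)/w^{\delta})=o(\mathcal{L}(w)/w^{\nu})$ --- which is a point in your favor (though note the correct comparison is between the two bounds on $\alpha$ and $\beta$, not the ratio $\beta/\alpha$ itself, since the hypotheses give only upper bounds on these remainders).
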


\begin{proof}
    It follows from Basic assumption that
\begin{equation}              \label{3.13}
    \mathcal{I}(x) := \int\limits_x^1 {{{g(u)} \over {f(u)}}\,du}
    = -{\int\limits_{{1}/{(1-x)}}^{\infty}} {y^{-(1+\gamma)}{\textsf{\emph{L}}}(y)dy},
\end{equation}
    where $\textsf{\emph{L}}(t) = {{\ell(t)} / {\mathcal{L}(t)}}$ as before.
    In our conditions, we can easily make sure that
\begin{equation*}
    {{{\textsf{\emph{L}}}(ut)} \over {{\textsf{\emph{L}}}(t)}}-1
    = \mathcal{O}\left({{{\mathcal{L}}(t)} \over {t^{\nu}}}\right)
    \quad \parbox{2.2cm} {\textit{as} {} $t \to \infty$}
\end{equation*}
    uniformly in $u>0$. On right-hand side of \eqref{3.13} we can directly use of the formula~\eqref{3.10}
    with $t= {{1}/{(1-x)}}$ and $r(t)=\mathcal{O}\left({{{\mathcal{L}}(t)} \bigl/ {t^{\nu}} \bigr.}\right)$.
    Then
\begin{equation*}
    \mathcal{I}(x) = -{{\,1\,} \over {\gamma}}{{\textsf{\emph{L}}(t)} \over {t^{\gamma}}}
    \left[1 + \mathcal{O}\left({{\mathcal{L}(t)} \over {t^{\nu}}}\right) \right]
    \quad \parbox{2.2cm} {\textit{as} {} $t \to \infty$.}
\end{equation*}
    Now returning to primary designations, we get to \eqref{3.12}.

    The Lemma is proved.
\end{proof}

\section{Proof of Theorems}   \label{MySec:4}

    In this final section we consistently prove the Main results.

\begin{proof} [Proof of Theorem~\ref{MyTh:1}]
    Rewrite \eqref{2.2} as follows:
\begin{equation}                    \label{4.1}
    {\mathcal{P}}(t;s) = U(s) \exp \left\{{\int\limits_1^{F(t;s)} {{{g(u)} \over {f(u)}}\,du}} \right\},
\end{equation}
    where
\begin{equation}                    \label{4.2}
    U(s) = \exp \left\{{\int\limits_s^{1} {{{g(u)} \over {f(u)}}\,du}} \right\}.
\end{equation}
    First we see that due to \eqref{2.3} the integral in \eqref{4.1} converges for $s \in [0, 1)$ and becomes $0$
    as $t \to \infty$, therefore ${\mathcal{P}}(t;s)$ converges to $U(s)$ as $t \to \infty$ uniformly over compact
    subsets. Now, using the functional equation $F(t+\tau;s) = {F\bigl(t; {F(\tau;s)}\bigr)}$
    (see {\cite[p.~134]{PakesSankh}}), it follows
\begin{eqnarray*}
    {\mathcal{P}}(t+\tau;s)
    & = & {\mathcal{P}}(\tau;s)\cdot\exp \left\{ {\int\limits_{\tau}^{t+\tau}
    {g\left( {F(u;s)} \right)du}} \right\} \\
\nonumber\\
    & = & {\mathcal{P}}(\tau;s) \cdot \exp \left\{ {\int\limits_{0}^{t}
    {g\left( {F\bigl(u; {F(\tau;s)}\bigr)} \right)du}} \right\}
    =  {\mathcal{P}}(\tau;s)\cdot {\mathcal{P}}\bigl(t; {F(\tau;s)}\bigr),
\end{eqnarray*}
    and taking limit as $t \to \infty$ we have the following Schr\"{o}der type functional equation:
\begin{equation}                    \label{4.3}
    {U}\bigl(F(\tau;s)\bigr) = {\frac{1}{{\mathcal{P}}(\tau;s)}} \, {U}(s)
    \qquad \parbox{2.8cm}{{for any} {} $\tau \in {\mathcal{T}}$.}
\end{equation}
    Writing the power series expansion ${U}(s)= \sum\nolimits_{j \in{\mathcal{S}}}{{u}_j s^j}$, the equation \eqref{4.3}
    entails an invariant property ${u}_j = \sum\nolimits_{i \in {\mathcal{S}}} {{u}_i p_{ij}(\tau)}$. Obviously ${U}(1-)=1$
    and hence the function in \eqref{4.2} generates an invariant distribution $\left\{u_j, j\in{\mathcal{S}}\right\}$ for MBPI.

    Now we pass to the proof of \eqref{2.4}. Using \eqref{3.12} in \eqref{4.1} we obtain
\begin{equation}                    \label{4.4}
    {\mathcal{P}}(t;s) = U(s) \exp \Bigl\{{-I(t;s)} \Bigr\},
\end{equation}
    where
\begin{equation}                    \label{4.5}
    I(t;s) = {{\,1\,}\over {\gamma}}{{g{\bigl(F(t;s)\bigr)}} \over {\Lambda{\bigl(R(t;s)\bigr)}}}
    \left(1+\mathcal{O}\Bigl({\Lambda{\left(R(t;s)\right)}}\Bigr)\right)
    \quad \parbox{2.2cm}{{as} {} $t  \to \infty$.}
\end{equation}
    Next, we have to use the asymptotic expansion of $R(t;s)$. Relation \eqref{3.5} implies
\begin{equation}                    \label{4.6}
    {{1} \over {\Lambda \left(R(t;s)\right)}} = \lambda(t;s)
    \left(1+\mathcal{O}\left({{\ln{\left[\Lambda{(1-s)}\lambda(t;s)\right]}} \over {\lambda(t;s)}}\right)\right)
    \quad \parbox{2.2cm}{{as} {} $t  \to \infty$,}
\end{equation}
    and therefore
\begin{equation}                    \label{4.7}
    {R(t;s)} = {{\mathcal{N}(t;s)} \over {\left(\lambda(t;s)\right)^{1/\nu}}}
    \left(1+\mathcal{O}\left({{\ln{\left[\Lambda{(1-s)}\lambda(t;s)\right]}} \over {\lambda(t;s)}}\right)\right)
    \quad \parbox{2.2cm}{{as} {} $t  \to \infty$,}
\end{equation}
    where $\lambda(t;s)=\nu{t}+{\Lambda}^{-1}\left({1-s}\right)$ and $\mathcal{N}(t;s)={\mathcal{L}^{-{1/\nu}}}
    \bigl(1/{R(t;s)}\bigr)$. Since $g(s)$ is form of $[g_\delta]$, using \eqref{4.6} and \eqref{4.7} we obtain
\begin{equation}                    \label{4.8}
    {{g{\bigl(F(t;s)\bigr)}} \over {\Lambda{\bigl(R(t;s)\bigr)}}} = -{{\mathcal{N}^{\delta}(t;s)}
    \over {\left(\lambda(t;s)\right)^{{\gamma}/{\nu}}}} {\ell{\left({1}\over{R(t;s)}\right)}}
    \left(1+\mathcal{O}\left({{\ln{\left[\Lambda{(1-s)}\lambda(t;s)\right]}}\over{\lambda(t;s)}}\right)\right)
\end{equation}
    as $t \to \infty$. It it is easy to verify that the function $\mathcal{N}(t;s)$ is asymptotically
    equivalent to the $\textrm{SV}_\infty$-function $\mathcal{N}(t)$ defined in Lemma~\ref{MyLem:1}.

    Asymptotic formula \eqref{2.4} now follows from a combination of formulas \eqref{4.4}, \eqref{4.5}
    and \eqref{4.8}. Equation \eqref{2.5} follows from the continuity theorem for power series.

    The Theorem is proved.
\end{proof}

\begin{proof} [Proof of Theorem~\ref{MyTh:2}]
    We write
\begin{eqnarray}                    \label{4.9}
    {e^{T(t)}} {\mathcal{P}} (t;s)
    & = & \exp \left\{ {\bigl(\tau(t)\bigr)^{|\gamma|}} + {\int\limits_0^{t}
    {g\left( {F(u;s)} \right)du}} \right\}   \nonumber \\
    \nonumber  \\
    & = & \exp \left\{ \Delta(t;s) + {\bigl(\tau(t;s)\bigr)^{|\gamma|}} +
    {\int\limits_s^{F(t;s)} {{{g(x)} \over {f(x)}}\,dx}} \right\},
\end{eqnarray}
    where $\Delta(t;s)={\bigl(\tau(t)\bigr)^{|\gamma|}} -{\bigl(\tau(t;s)\bigr)^{|\gamma|}}$
    and ${\tau(t;s)}={R^{-1}(t;s)}$. A standard integration method yields
\begin{equation*}
    {\bigl(\tau(t;s)\bigr)^{|\gamma|}} = {{\,1\,}\over{(1-s)^{|\gamma|}}}
    + {\int\limits_s^{F(t;s)} {{{|\gamma| \over {(1-u)^{1+|\gamma|}}}} du}}.
\end{equation*}
    Therefore the relation \eqref{4.9} can be written as follows:
\begin{equation}                    \label{4.10}
    {e^{T(t)}} {\mathcal{P}} (t;s) = \pi(s) \cdot \exp \left\{\Delta(t;s)
    - {\int\limits_{F(t;s)}^{1} {\left[ {{{g(u)} \over {f(u)}}
    + {|\gamma| \over {(1-u)^{1+|\gamma|}}}}\right]du}} \right\},
\end{equation}
    where $\pi(s)$ has the form of \eqref{2.7}. An exponential factor in \eqref{4.10} defines the
    convergence rate $\rho(t;s)$ in \eqref{2.8}. Let's first evaluate $\Delta(t;s)$ as $t \to \infty$.
    According to Lemma~\ref{MyLem:1} $\mathcal{M}(0)=0$ and therefore ${\tau(t)}={\tau}(t;0)$.
    Hence, an asymptotic representation~\eqref{3.1} produces
\begin{eqnarray*}
    \Delta(t;s)
    & = & {\bigl(\tau(t)\bigr)^{|\gamma|}}\left[1-\left({1+{{\mathcal{M}(s)}
    \over {t}}}\right)^{|\gamma|/\nu}\right]          \nonumber \\
\nonumber \\
    & \sim & - |\gamma|{\bigl(\tau(t)\bigr)^{|\gamma|}}{{\mathcal{M}(s)} \over {{\nu}t}}
    = - |\gamma|{{\mathcal{M}(s)} \over {({\nu}t)^{\delta/\nu} {\mathcal{N}^{|\gamma|}(t)}}}
    \quad \parbox{2.2cm}{{as} {} $t \to \infty$.} \nonumber
\end{eqnarray*}
    On the other hand $\mathcal{M}(s)$ is bounded for $s\in [0, r]$,  $r<1$. Thus
\begin{equation}                    \label{4.11}
    \Delta(t;s)=\mathcal{O}\left({{\mathcal{L}_{\gamma}(t)} \over {t^{\delta/\nu}}}\right)
    \quad \parbox{2.2cm}{{as} {} $t \to \infty$,}
\end{equation}
    uniformly in $s\in [0, r]$,  $r<1$, where ${\mathcal{L}_{\gamma}(t)}={\mathcal{N}^{-|\gamma|}(t)}$.

    Now observe the integral in \eqref{4.10}. By virtue of relations \eqref{2.3} and \eqref{2.6}, the
    integrand in brackets becomes $\mathcal{O}\left( (1-u)^{\mu -1}{\ell}\bigl({1}/{(1-u)}\bigr)\right)$
    in the neighbourhood of the point $u=1$. So we have to examine the integral 
\[
    \int_{F(t;s)}^1 {(1-u)^{\mu -1}{\ell}\bigl({1}/{(1-u)}\bigr)du}
\] 
    as $t \to \infty$. Make the substitution $y=(1-u)^{-1}$
    to obtain the alternative form
\begin{equation*}
    {\int\limits_{{1}/{R(t;s)}}^{\infty}} {y^{-(1+\mu)}{\ell}(y)dy}.
\end{equation*}
    The direct application of Lemma~\ref{MyLem:3} transforms the last integral to the form
\begin{equation*}
    {\int\limits_{{1}/{R(t;s)}}^{\infty}} {y^{-(1+\mu)}{\ell}(y)dy}= {{\,1\,} \over {\mu}}
    R^{\mu}(t;s) {\ell}\left({1 \over {R(t;s)}}\right) \bigl({1+ o(1)}\bigr)
    \quad \parbox{2.2cm}{{as} {} $t \to \infty$.}
\end{equation*}
    But $R(t;s)={{\tau}^{-1}(t;s)}$ and $\tau(t;s){\tau}^{-1}(t)\to 1$
    as $t \to \infty$ uniformly in $s\in [0, 1)$. Thus
\begin{equation}                    \label{4.12}
    {\int\limits_{F(t;s)}^{1} {\left[ {{{g(u)} \over {f(u)}} +
    {|\gamma| \over {(1-u)^{1+|\gamma|}}}}\right]du}} =
    \mathcal{O}\left({{{\ell}\left(\tau(t)\right)} \over {{\left(\tau(t)\right)}^{\mu}}}\right)
    \quad \parbox{2.2cm}{{as} {} $t \to \infty$.}
\end{equation}

    Since $\mu < \delta$, comparing relations \eqref{4.11} and \eqref{4.12} gives that $\Delta(t;s)$
    decreases to zero faster than last integral, i.e. 
\[
    \Delta(t;s)={o}\left({{{\ell}\left(\tau(t)\right)} \bigl/{{\left(\tau(t)\right)}^{\mu}}} \bigr. \right) 
\] 
    as $t \to \infty$. So, considering together
    \eqref{4.10}--\eqref{4.12} we reach to asymptotic relation \eqref{2.8} with the error part $\rho(t;s)$
    in the form~\eqref{2.9}. Equation \eqref{2.10} follows from the continuity theorem for power series.

    Finally, we can verify that the function $\pi(s)$ satisfies the equation~\eqref{3.3}. Therefore  denoting its
    power series representation by ${\pi}(s)= \sum\nolimits_{j \in{\mathcal{S}}}{\pi_j s^j}$, we have an invariant
    property ${\pi}_j=\sum\nolimits_{i \in{\mathcal{S}}}{{\pi}_i p_{ij}(\tau)}$ for any $\tau >0$. Thus
    $\left\{{\pi}_j, j \in{\mathcal{S}}\right\}$ is an invariant measure for MBPI $X(t)$.

    The Theorem is proved.
\end{proof}

\begin{proof} [Proof of Corollary~\ref{MyCor:1}]
    The statement is immediately obtained from \eqref{2.8} setting $x = 0$ there.
\end{proof}

{\bf Acknowledgment.} The author is deeply grateful to the anonymous referee for his careful reading of the manuscript and
        for his kindly comments which contributed to improving the paper.

\medskip

\end{document}